\documentclass{article}

\usepackage[letterpaper,top=2cm,bottom=2cm,left=3cm,right=3cm,marginparwidth=1.75cm]{geometry}

\usepackage{amsmath}
\usepackage{amssymb}
\usepackage{amsthm}
\usepackage{graphicx}
\usepackage[colorlinks=true, allcolors=blue]{hyperref}
\usepackage{float}
\theoremstyle{plain}
\newtheorem{theorem}{Theorem}

\newtheorem{lemma}[theorem]{Lemma}

\theoremstyle{definition}

\title{The Singer--Zauner gap for equiangular tight frames}
\author{Matthew Fickus\thanks{Department of Mathematics and Statistics, Air Force Institute of Technology, Wright-Patterson AFB, OH} \and John Jasper\footnotemark[1] \and Dustin G.\ Mixon\thanks{Department of Mathematics, The Ohio State University, Columbus, OH} \thanks{Translational Data Analytics Institute, The Ohio State University, Columbus, OH}}
\date{}

\begin{document}
\maketitle

\begin{abstract}
We show that there does not exist a complex $d\times n$ equiangular tight frame with
\[
d^2-d+1<n<d^2.
\]
The proof, which originated from an internal model at OpenAI, mimics the relationship between real equiangular tight frames and strongly regular graphs.
\end{abstract}

\section{Introduction}

A complex \textit{equiangular tight frame} is a matrix $\boldsymbol{X}\in\mathbb{C}^{d\times n}$ for which there exist $\alpha,\beta>0$ such that
\[
|\boldsymbol{X}^*\boldsymbol{X}|=(1-\alpha)\boldsymbol{I}+\alpha\boldsymbol{J},
\qquad
\boldsymbol{X}\boldsymbol{X}^*
=\beta\boldsymbol{I},
\]
from which a trace-cycling argument gives
\[
\alpha=\sqrt{\frac{n-d}{d(n-1)}},
\qquad
\beta=\frac{n}{d}.
\]
Note that any equiangular tight frame is necessarily a \textit{unit norm tight frame}, meaning $\boldsymbol{X}$ has unit-norm columns and $\boldsymbol{X}\boldsymbol{X}^*
=\frac{n}{d}\boldsymbol{I}$.
Equiangular tight frames form optimal codes in projective space~\cite{Welch:74,CohnK:07}, and accordingly, they find a variety of applications~\cite{StrohmerH:03,HolmesP:04,RenesBSC:04,MixonQKF:13,BandeiraFMW:13}.
See~\cite{FickusM:15} for a survey.

We are interested in necessary conditions for equiangular tight frames.
Gerzon's bound~\cite{LemmensS:73} gives $n\leq d^2$, which Zauner~\cite{Zauner:99} conjectured is tight for every $d\geq2$.
A Naimark complement argument then gives another necessary condition: $n\leq (n-d)^2$.
Finally, the necessary condition $(d,n)\neq(3,8)$ follows from an expensive Gr\"obner basis calculation~\cite{Szollosi:14}.
These are the only necessary conditions that are known to date, though they are conjectured to be grossly insufficient~\cite{Mixon:15,Mixon:19}.

Much stronger necessary conditions are known for \textit{real} equiangular tight frames.
To see this, first apply an orthogonal transformation and sign the column vectors so that
\begin{equation}
\label{eq.block form}
\boldsymbol{X}
=\left[\begin{array}{cc}
1&\alpha\mathbf{1}^{\mathsf{T}}\\
\mathbf{0}&\sqrt{1-\alpha^2}\, \boldsymbol{Y}
\end{array}\right].
\end{equation}
Since $\boldsymbol{X}$ is a $d\times n$ unit norm tight frame, it follows that $\boldsymbol{Y}$ is a $(d-1)\times (n-1)$ unit norm tight frame with $\boldsymbol{Y}\boldsymbol{1}=\boldsymbol{0}$.
Furthermore, since $\boldsymbol{Y}$ is real, each off-diagonal entry of $\boldsymbol{Y}^*\boldsymbol{Y}$ takes one of two values, i.e., $\boldsymbol{Y}$ is a so-called \textit{two-distance tight frame}~\cite{BargGOY:15}.
By decomposing in terms of entry values, we get
\[
\boldsymbol{Y}^*\boldsymbol{Y}
=\boldsymbol{I}+\delta\boldsymbol{A}+\varepsilon\boldsymbol{B},
\]
where $\boldsymbol{A}$ and $\boldsymbol{B}$ are adjacency matrices of complementary strongly regular graphs.
In this way, real $d\times n$ equiangular tight frames correspond to $(v,k,\lambda,\mu)$-strongly regular graphs with $v=n-1$ and $k=2\mu$; see~\cite{Waldron:09}.
Thanks to this identification, one may collect strong necessary conditions from the combinatorial design literature~\cite{BrouwerVM:22}.

In this note, we mimic the real setting to obtain a new necessary condition on equiangular tight frames in the complex setting, which in turn recovers the condition $(d,n)\neq(3,8)$:

\begin{theorem}
\label{thm.singer-zauner gap}
There does not exist a complex $d\times n$ equiangular tight frame with $d^2-d+1<n<d^2$.
\end{theorem}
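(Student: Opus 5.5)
We mimic the real argument: pass to the derived frame $\boldsymbol{Y}$ of~\eqref{eq.block form}, and replace "two-distance" by "the inner products lie on a fixed circle in $\mathbb{C}$". The contradiction will then come from comparing a rank bound with an eigenvalue-multiplicity bound.

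\emph{Normalization.} Apply a unitary so that the first column of $\boldsymbol{X}$ is $\boldsymbol{e}_1$. Every other column has first entry of modulus $\alpha$, so rescaling columns by unimodular scalars gives exactly the block form~\eqref{eq.block form} with a complex $\boldsymbol{Y}$. As in the real case, $\boldsymbol{Y}$ is a $(d-1)\times(n-1)$ unit norm tight frame with $\boldsymbol{Y}\mathbf{1}=\mathbf{0}$. Equiangularity gives, for $i\neq j$,
\[
|\alpha^2+(1-\alpha^2)\langle y_i,y_j\rangle|=\alpha .
\]
So every off-diagonal entry $z$ of $\boldsymbol{G}:=\boldsymbol{Y}^*\boldsymbol{Y}$ lies on the circle with center $-c$ and radius $r$, where $c=\alpha^2/(1-\alpha^2)$ and $r=\alpha/(1-\alpha^2)$. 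Expanding $|z+c|^2=r^2$ gives $|z|^2=(r^2-c^2)-2c\,\mathrm{Re}\,z$, which is affine in $\mathrm{Re}\,z$.

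\emph{Matrix identity.} Let $\boldsymbol{M}=\boldsymbol{G}\circ\overline{\boldsymbol{G}}$. This is the Gram matrix of the rank-one Hermitian matrices $y_iy_i^*$, so $\boldsymbol{M}\succeq0$ and $\operatorname{rank}\boldsymbol{M}\le (d-1)^2$. The affine relation, corrected on the diagonal, gives
\[
\boldsymbol{M}=p\boldsymbol{I}+q\boldsymbol{J}-2c\cdot\tfrac12(\boldsymbol{G}+\overline{\boldsymbol{G}}),
\]
with explicit constants $p,q$ depending only on $d,n$.

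We use the spectra of $\boldsymbol{G}$ and $\overline{\boldsymbol{G}}$:
\begin{itemize}
\item $\boldsymbol{G}=\tfrac{n-1}{d-1}\boldsymbol{P}$ and $\overline{\boldsymbol{G}}=\tfrac{n-1}{d-1}\overline{\boldsymbol{P}}$, where $\boldsymbol{P},\overline{\boldsymbol{P}}$ are rank-$(d-1)$ orthogonal projections.
\item Both annihilate $\mathbf{1}$, so on $\mathbf{1}^\perp$ we have $\boldsymbol{M}=p\boldsymbol{I}-c\tfrac{n-1}{d-1}(\boldsymbol{P}+\overline{\boldsymbol{P}})$.
\end{itemize}
Hence
\[
n-1-1-\dim\ker\boldsymbol{M}\le(d-1)^2,
\]
and $\ker\boldsymbol{M}\cap\mathbf{1}^\perp$ is the eigenspace of $\boldsymbol{P}+\overline{\boldsymbol{P}}$ for the eigenvalue $\mu:=p(d-1)/(c(n-1))$. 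I will check that $p\neq 0$, which gives $\mu\neq0$.

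\emph{Multiplicity bound.} By the principal-angle (Jordan/Halmos two-projection) decomposition of a pair of rank-$(d-1)$ projections, the nonzero eigenvalues of $\boldsymbol{P}+\overline{\boldsymbol{P}}$ are $2$, and $1\pm\cos\theta_k$, and possibly $1$. Consequently any eigenvalue $\mu\notin\{0,1\}$ has multiplicity at most $d-1$, while $\mu=1$ could have multiplicity up to $2(d-1)$. The generic case gives
\[
n-2-(d-1)\le(d-1)^2,\qquad\text{i.e.}\qquad n\le d^2-d+1,
\]
which is the claimed gap.

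The main obstacle is the computation of $\mu$. Substituting $\alpha^2=(n-d)/(d(n-1))$ into $p$ and $c$, I must show that $\mu\neq1$ whenever $d^2-d+1<n<d^2$, and expect $\mu=1$ to hold only at $n=d^2$, where the SIC case is consistent with Gerzon. If $\mu=1$ does occur somewhere in the range, I would instead exploit the trace/positivity constraint that $\boldsymbol{M}\succeq0$ forces: its smallest eigenvalue on $\mathbf{1}^\perp$ is $p-c\tfrac{n-1}{d-1}\cdot 2$ on $\operatorname{ran}\boldsymbol{P}\cap\operatorname{ran}\overline{\boldsymbol{P}}$. I would also use the identity $\sum_i y_iy_i^*=\tfrac{n-1}{d-1}\boldsymbol{I}$, which puts one linear relation among the $y_iy_i^*$. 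Either of these should remove the borderline multiplicity and sharpen the count back to $n-1\le d(d-1)$.
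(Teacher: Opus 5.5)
Your proposal is the paper's argument in all essentials, but as written its final count is off by one and does not reach $n\le d^2-d+1$; the fix is a one-line correction. The shared pieces are the same block form, the same matrix $\boldsymbol{M}=\boldsymbol{H}\odot\overline{\boldsymbol{H}}$ and affine identity, the same bound $\operatorname{rank}\boldsymbol{M}\le(d-1)^2$, and the same key estimate $\dim\ker\boldsymbol{M}\le d-1$. Your $c,p,q$ are the paper's $\gamma,\gamma+1,\gamma$.

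The one real difference is how you get $\dim\ker\boldsymbol{M}\le d-1$. The paper proves a short lemma: from $\boldsymbol{P}^2=\boldsymbol{P}$, the imaginary part of $\boldsymbol{P}$ maps the $\lambda$-eigenspace of $\operatorname{Re}\boldsymbol{P}$ bijectively onto the $(1-\lambda)$-eigenspace. It combines this with $\operatorname{rank}\boldsymbol{R}\le 2(d-1)$. You instead cite the two-projection decomposition of $(\boldsymbol{P},\overline{\boldsymbol{P}})$. Its pairing $1\pm\cos\theta_k$ is exactly the paper's pairing $\lambda\leftrightarrow 1-\lambda$ for $\operatorname{Re}\boldsymbol{P}=\frac12(\boldsymbol{P}+\overline{\boldsymbol{P}})$.

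The paper's lemma is self-contained but needs $\lambda\in(0,1)$, hence its reduction to $n>\binom{d+1}{2}$. Your version handles every $\mu\notin\{0,1\}$ at once. In the relevant range you can even skip principal angles. If $\mu>1$ and $\boldsymbol{x}\in\ker\boldsymbol{P}$ is a $\mu$-eigenvector of $\boldsymbol{P}+\overline{\boldsymbol{P}}$, then $\mu\|\boldsymbol{x}\|^2=\langle\overline{\boldsymbol{P}}\boldsymbol{x},\boldsymbol{x}\rangle\le\|\boldsymbol{x}\|^2$. So that eigenspace meets $\ker\boldsymbol{P}$ trivially and has dimension at most $d-1$.

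Two points need repair. First, the off-by-one. $\boldsymbol{M}$ is $(n-1)\times(n-1)$, so $\operatorname{rank}\boldsymbol{M}=(n-1)-\dim\ker\boldsymbol{M}$, not $n-2-\dim\ker\boldsymbol{M}$. Your displayed inequality $n-2-(d-1)\le(d-1)^2$ rearranges to $n\le d^2-d+2$, not $n\le d^2-d+1$. That would leave $(d,n)=(3,8)$ unexcluded, and it is the only case in the gap for $d=3$. With the correct rank you get $n-1-(d-1)\le(d-1)^2$, i.e.\ $n\le d^2-d+1$.

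Second, what you call the main obstacle is a one-line computation. We have $r^2-c^2=c$, so $q=c$ and $p=1+c$. Then
$\mu=\frac{(1+c)(d-1)}{c(n-1)}=\frac{d-1}{\alpha^2(n-1)}=\frac{d(d-1)}{n-d}$, which is twice the paper's $\lambda$. It equals $1$ only when $n=d^2$ and lies in $(1,2)$ throughout the gap. So your expectation is correct, and the fallback in your last paragraph (positivity, the frame identity) is unnecessary.
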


By a difference set construction due to Singer~\cite{Singer:38}, there exists a $d\times n$ equiangular tight frame with $n=d^2-d+1$ whenever $d-1$ is a prime power.
For this reason, we refer to Theorem~\ref{thm.singer-zauner gap} as the \textit{Singer--Zauner gap}.
The remainder of this paper proves Theorem~\ref{thm.singer-zauner gap} using an argument initially due to an internal model at OpenAI.
(N.B.:\ Despite the use of AI, this paper was written the old-fashioned way by the authors, who take full responsibility for the veracity of its claims.)

\section{Proof of the main result}

Suppose there exists a complex $d\times n$ equiangular tight frame with $n<d^2$.
We will show that $n\leq d^2-d+1$.
Since $\binom{d+1}{2}\leq d^2-d+1$, we may assume $n>\binom{d+1}{2}$ without loss of generality.
Apply a unitary transformation to the equiangular tight frame and phase the column vectors so that $\boldsymbol{X}$ takes the block form in~\eqref{eq.block form}.
Then
\[
\boldsymbol{X}^*\boldsymbol{X}
=\left[\begin{array}{cc}
1&\mathbf{0}^{\mathsf{T}}\\
\alpha\mathbf{1}&\sqrt{1-\alpha^2}\,\boldsymbol{Y}^*
\end{array}\right]
\left[\begin{array}{cc}
1&\alpha\mathbf{1}^{\mathsf{T}}\\
\mathbf{0}&\sqrt{1-\alpha^2}\,\boldsymbol{Y}
\end{array}\right]
=\left[\begin{array}{cc}
1&\alpha\mathbf{1}^{\mathsf{T}}\\
\alpha\mathbf{1}&\alpha^2\boldsymbol{J}+(1-\alpha^2)\boldsymbol{Y}^*\boldsymbol{Y}\end{array}\right].
\]
If $z$ is an off-diagonal entry of $\boldsymbol{H}:=\boldsymbol{Y}^*\boldsymbol{Y}$, then
\[
\alpha^2
=|\alpha^2+(1-\alpha^2)z|^2
=(1-\alpha^2)^2|z|^2+2\alpha^2(1-\alpha^2)\operatorname{Re}(z)+\alpha^4,
\]
which rearranges to
\[
|z|^2+2\gamma\operatorname{Re}(z)-\gamma=0,
\qquad
\gamma:=\frac{\alpha^2}{1-\alpha^2}.
\]
This suggests taking the entrywise product $\boldsymbol{K}:=\boldsymbol{H}\odot\overline{\boldsymbol{H}}$ and average $\boldsymbol{R}:=\frac{1}{2}(\boldsymbol{H}+\overline{\boldsymbol{H}})$ so that
\[
\boldsymbol{K}+2\gamma\boldsymbol{R}-\gamma\boldsymbol{J}
=(\gamma+1)\boldsymbol{I}.
\]
Indeed, the diagonals of $\boldsymbol{K}$, $\boldsymbol{R}$, $\boldsymbol{J}$, and $\boldsymbol{I}$ all equal $1$.
Strikingly, the real symmetric matrices $\boldsymbol{K}$ and $\boldsymbol{R}$ are simultaneously diagonalizable.
To see this, first note that $\boldsymbol{Y}\mathbf{1}=\mathbf{0}$ implies $\boldsymbol{R}\mathbf{1}
=\mathbf{0}$, and so the above identity gives $\boldsymbol{K}\mathbf{1}
=\frac{n-1}{d-1}\mathbf{1}$.
Next, every eigenvector of $\boldsymbol{K}$ that is perpendicular to $\mathbf{1}$ is also an eigenvector of $\boldsymbol{R}$ (and vice versa), and the corresponding eigenvalues $\kappa$ and $\rho$ satisfy the relation
\[
\kappa + 2\gamma\rho
=\gamma+1.
\]
In particular, we have $\operatorname{ker}(\boldsymbol{K})=\operatorname{ker}(\boldsymbol{R}-\mu\boldsymbol{I})$, where
\[
\mu
:=\frac{\gamma+1}{2\gamma}
=\frac{d(n-1)}{2(n-d)}.
\]
Furthermore, the following lemma gives that most eigenvalues and eigenspaces of $\boldsymbol{R}$ come in pairs.

\begin{lemma}
Given a complex matrix $\boldsymbol{P}$ such that $\boldsymbol{P}^2=\boldsymbol{P}$, let $\boldsymbol{A}$ and $\boldsymbol{B}$ be real matrices such that
\[
\boldsymbol{P}
=\boldsymbol{A}+i\boldsymbol{B}.
\]
For each $\lambda\in(0,1)$, it holds that multiplication by $\boldsymbol{B}$ defines a linear bijection
\[
\operatorname{ker}(\boldsymbol{A}-\lambda\boldsymbol{I})
\,\longrightarrow\,
\operatorname{ker}(\boldsymbol{A}-(1-\lambda)\boldsymbol{I}).
\]
\end{lemma}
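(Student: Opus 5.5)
Write $\boldsymbol{P}^2=\boldsymbol{P}$ in terms of $\boldsymbol{A}$ and $\boldsymbol{B}$ and separate real and imaginary parts. Expanding $(\boldsymbol{A}+i\boldsymbol{B})^2=\boldsymbol{A}^2-\boldsymbol{B}^2+i(\boldsymbol{A}\boldsymbol{B}+\boldsymbol{B}\boldsymbol{A})$ and comparing with $\boldsymbol{A}+i\boldsymbol{B}$ gives, since $\boldsymbol{A}$ and $\boldsymbol{B}$ are real,
\[
\boldsymbol{A}^2-\boldsymbol{B}^2=\boldsymbol{A},
\qquad
\boldsymbol{A}\boldsymbol{B}+\boldsymbol{B}\boldsymbol{A}=\boldsymbol{B}.
\]
These two identities are all we need; no symmetry or self-adjointness of $\boldsymbol{P}$ is required. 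The kernels may be taken over $\mathbb{R}$ or $\mathbb{C}$, and the argument is the same.

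First, $\boldsymbol{B}$ maps $\operatorname{ker}(\boldsymbol{A}-\lambda\boldsymbol{I})$ into $\operatorname{ker}(\boldsymbol{A}-(1-\lambda)\boldsymbol{I})$. If $\boldsymbol{A}v=\lambda v$, the second identity gives $\boldsymbol{A}(\boldsymbol{B}v)=\boldsymbol{B}v-\boldsymbol{B}\boldsymbol{A}v=(1-\lambda)\boldsymbol{B}v$. The same computation with $1-\lambda$ in place of $\lambda$ shows that $\boldsymbol{B}$ also maps $\operatorname{ker}(\boldsymbol{A}-(1-\lambda)\boldsymbol{I})$ back into $\operatorname{ker}(\boldsymbol{A}-\lambda\boldsymbol{I})$. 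Note that $1-\lambda\in(0,1)$ whenever $\lambda\in(0,1)$.

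Next, on $\operatorname{ker}(\boldsymbol{A}-\lambda\boldsymbol{I})$ the first identity gives
\[
\boldsymbol{B}^2v=\boldsymbol{A}^2v-\boldsymbol{A}v=(\lambda^2-\lambda)v=-\lambda(1-\lambda)v.
\]
Since $\lambda\in(0,1)$, the scalar $c:=-\lambda(1-\lambda)$ is nonzero. Thus $\boldsymbol{B}$ from the $\lambda$-space to the $(1-\lambda)$-space, followed by $\boldsymbol{B}$ back, is the map $c\,\boldsymbol{I}$. Likewise, the composite in the other order is $c\,\boldsymbol{I}$ on the $(1-\lambda)$-space, because $-(1-\lambda)\lambda=c$.

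It follows that $c^{-1}\boldsymbol{B}$, restricted to the $(1-\lambda)$-space, is a two-sided inverse of $\boldsymbol{B}$ restricted to the $\lambda$-space. Hence multiplication by $\boldsymbol{B}$ is a linear bijection between the two eigenspaces. This covers the case $\lambda=1/2$ as well, where the map is an automorphism of a single space.

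The only real content is the nonvanishing of $\lambda(1-\lambda)$, which is exactly where $\lambda\in(0,1)$ is used. At $\lambda\in\{0,1\}$ the map $\boldsymbol{B}$ can fail to be injective, and those eigenvalues are not paired. There is no substantial obstacle beyond getting the two identities right.
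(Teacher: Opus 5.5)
Your proof is correct and follows the paper's argument. You use the same two identities from the real and imaginary parts of $\boldsymbol{P}^2=\boldsymbol{P}$, the same computation showing $\boldsymbol{B}$ maps the $\lambda$-eigenspace of $\boldsymbol{A}$ into the $(1-\lambda)$-eigenspace, and the same fact that $\boldsymbol{B}^2=-\lambda(1-\lambda)\boldsymbol{I}$ there. The only difference is the finish: the paper gets bijectivity from injectivity in both directions plus a dimension count, while you exhibit $c^{-1}\boldsymbol{B}$ as an explicit two-sided inverse, which is slightly cleaner and does not need finite dimensionality.
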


\begin{proof}
Take the real and imaginary parts of the identity $\boldsymbol{P}^2=\boldsymbol{P}$ to get
\[
\boldsymbol{A}^2-\boldsymbol{B}^2
=\boldsymbol{A},
\qquad
\boldsymbol{A}\boldsymbol{B}+\boldsymbol{B}\boldsymbol{A}
=\boldsymbol{B}.
\]
Given $\lambda\in(0,1)$, then for every $\boldsymbol{x}\in\operatorname{ker}(\boldsymbol{A}-\lambda\boldsymbol{I})$, it holds that
\[
\boldsymbol{A}\boldsymbol{B}\boldsymbol{x}
=(\boldsymbol{B}-\boldsymbol{B}\boldsymbol{A})\boldsymbol{x}
=\boldsymbol{B}\boldsymbol{x}-\boldsymbol{B}(\boldsymbol{A}\boldsymbol{x})
=(1-\lambda)\boldsymbol{B}\boldsymbol{x},
\]
i.e., $\boldsymbol{B}\boldsymbol{x}\in\operatorname{ker}(\boldsymbol{A}-(1-\lambda)\boldsymbol{I})$.
Thus, for each $\lambda\in(0,1)$, multiplication by $\boldsymbol{B}$ defines a linear map
\[
L_\lambda\colon \operatorname{ker}(\boldsymbol{A}-\lambda\boldsymbol{I})\to\operatorname{ker}(\boldsymbol{A}-(1-\lambda)\boldsymbol{I}).
\]
To show $L_\lambda$ is injective, suppose $L_\lambda(\boldsymbol{x})=\boldsymbol{0}$.
Then
\[
\boldsymbol{0}
=\boldsymbol{B}L_\lambda(\boldsymbol{x})
=\boldsymbol{B}^2\boldsymbol{x}
=(\boldsymbol{A}^2-\boldsymbol{A})\boldsymbol{x}
=(\lambda^2-\lambda)\boldsymbol{x}
=-\lambda(1-\lambda)\boldsymbol{x}.
\]
Since $\lambda\notin\{0,1\}$, we have $\boldsymbol{x}=\boldsymbol{0}$.
Then $L_\lambda$ and $L_{1-\lambda}$ are both injective, so the domains of both maps have equal dimension, and the result then follows from rank--nullity.
\end{proof}

Since $\boldsymbol{Y}$ is a unit norm tight frame, $\boldsymbol{P}:=\frac{d-1}{n-1}\boldsymbol{H}$ is an orthogonal projection matrix.
Put
\[
\lambda
:=\frac{d-1}{n-1}\cdot\mu
=\frac{d(d-1)}{2(n-d)},
\]
and observe that $n<d^2$ implies
\[
\lambda
=\frac{d(d-1)}{2(n-d)}
>\frac{d(d-1)}{2(d^2-d)}
=\frac{1}{2},
\]
while $n>\binom{d+1}{2}$ implies
\[
\lambda
=\frac{d(d-1)}{2(n-d)}
<\frac{d(d-1)}{2(\binom{d+1}{2}-d)}
=1,
\]
i.e., $\lambda\in(\frac{1}{2},1)$.
By decomposing $\boldsymbol{P}=\boldsymbol{A}+i\boldsymbol{B}$ as in the lemma, it follows that
\[
\operatorname{ker}(\boldsymbol{K})
=\operatorname{ker}(\boldsymbol{R}-\mu\boldsymbol{I})
=\operatorname{ker}(\boldsymbol{A}-\lambda\boldsymbol{I})
\qquad
\text{and}
\qquad
\operatorname{ker}(\boldsymbol{R}-(\tfrac{n-1}{d-1}-\mu)\boldsymbol{I})
=\operatorname{ker}(\boldsymbol{A}-(1-\lambda)\boldsymbol{I})
\]
are orthogonal subspaces of $\operatorname{im}(\boldsymbol{R})$ of equal dimension.
Thus,
\[
\operatorname{rank}(\boldsymbol{R})
\geq\operatorname{dim}\Big(\operatorname{ker}(\boldsymbol{R}-\mu\boldsymbol{I})\oplus\operatorname{ker}(\boldsymbol{R}-(\tfrac{n-1}{d-1}-\mu)\boldsymbol{I})\Big)
=2\operatorname{nullity}(\boldsymbol{K}).
\]
Furthermore,
\[
\operatorname{rank}(\boldsymbol{K})
=\operatorname{rank}(\boldsymbol{H}\odot\overline{\boldsymbol{H}})
\leq\operatorname{rank}(\boldsymbol{H})\operatorname{rank}(\overline{\boldsymbol{H}})
=(d-1)^2,
\]
\[
\operatorname{rank}(\boldsymbol{R})
=\operatorname{rank}(\boldsymbol{H}+\overline{\boldsymbol{H}})
\leq\operatorname{rank}(\boldsymbol{H})+\operatorname{rank}(\overline{\boldsymbol{H}})
=2(d-1),
\]
and so
\[
2(d-1)
\geq\operatorname{rank}(\boldsymbol{R})
\geq2\operatorname{nullity}(\boldsymbol{K})
=2\big((n-1)-\operatorname{rank}(\boldsymbol{K})\big)
\geq2\big((n-1)-(d-1)^2\big),
\]
which rearranges to $n\leq d^2-d+1$.

\section*{Acknowledgments}

DGM thanks OpenAI for access to GPT-5.5 Pro, as well as an internal model.
JJ was supported in part by NSF DMS 2220320.
DGM was supported in part by NSF DMS 2220304.
The views expressed are those of the authors and do not reflect the official guidance or position of the United States Government, the Department of Defense, the United States Air Force, or the United States Space Force.

\end{document}